\documentclass[12pt,a4paper]{amsart}
\usepackage{url}
\usepackage{amsmath,amsthm,amsfonts,amssymb,latexsym,yhmath}


\textheight=23cm
 \textwidth=13.5cm
 \hoffset=-1cm
 \parindent=16pt

\newtheorem{theorem}{Theorem}[section]

\newtheorem{lemma}[theorem]{Lemma}

\theoremstyle{definition}
\newtheorem{definition}[theorem]{Definition}

\newtheorem{question}[theorem]{Question}

\newcommand{\U}{\mathcal U}
\newcommand{\w}{\omega}

\newcommand{\IQ}{\mathbb Q}

\newcommand{\IP}{\mathbb P}

\newcommand{\B}{\mathcal{B}}

\newcommand{\V}{\mathcal{V}}

\newcommand{\vid}{\wideparen{\ \ }}
\newcommand{\bigvid}{\wideparen{\ \ }}

\newcommand{\Lev}{\mathit{Lev}}

\newcommand{\uhr}{\upharpoonright}

\newcommand{\name}[1]{\dot{#1}}
\newcommand{\la}{\langle}
\newcommand{\ra}{\rangle}

\newcommand{\Split}{\mathrm{Split}}

\newcommand{\forces}{\Vdash}

\newcommand{\W}{\mathcal W}

\newcommand{\nothing}[1]{}

\title[Preservation of  $\gamma$-spaces]{Preservation of  $\gamma$-spaces and covering properties of products }

\author[D. Repov\v{s}  and L. Zdomskyy]{Du\v{s}an Repov\v{s}  and Lyubomyr Zdomskyy}

\address{Faculty of Education, and Faculty of Mathematics and Physics,
University of Ljubljana, 1000 Ljubljana, Slovenia.}
\email{dusan.repovs@guest.arnes.si}
\urladdr{http://www.fmf.uni-lj.si/\~{}repovs/index.htm}

\address{Institut f\"ur Diskrete Mathematik und Geometrie, Technische Universit\"at Wien, Wiedner Hauptstra\ss e 8-10/104, 1040 Wien, Austria.}
\email{lzdomsky@gmail.com}
\urladdr{http://dmg.tuwien.ac.at/zdomskyy/}

\subjclass[2010]{Primary: 03E35, 54D20. Secondary: 54C50, 03E05.}
\keywords{ Hurewicz space,  $\gamma$-space,
 Miller forcing, proper forcing.}

\thanks{The first author was partially supported by
the Slovenian Research Agency grant P1-0292 and N1-0083.
The second author would
like to thank  the Austrian Science Fund FWF (Grants I 2374-N35 and I 3709-N35)
 for generous support for this research.}

\begin{document}
\begin{abstract}
We prove that  the Hurewicz property is not preserved by finite products in the Miller model.
This is a consequence of the fact that  Miller forcing preserves ground model $\gamma$-spaces.
\end{abstract}

\maketitle

\section{Introduction}

When trying to describe  $\sigma$-compactness in terms of open covers, Hurewicz
 \cite{Hur25} introduced
  the following property, nowadays called \emph{ the Menger property}: A topological space
$X$ is said to have this  property if for every sequence $\la \U_n : n\in\omega\ra$
of open covers of $X$ there exists a sequence $\la \V_n : n\in\omega \ra$ such that
each $\V_n$ is a finite subfamily of $\U_n$ and the collection $\{\cup \V_n:n\in\omega\}$
is a cover of $X$. The current name (the Menger property) has been adopted because Hurewicz
proved in   \cite{Hur25} that for metrizable spaces his property is equivalent to
a certain basis property considered by Menger in \cite{Men24}.
If in the definition above we additionally require that $\{\cup\V_n:n\in\w\}$
is a \emph{$\gamma$-cover} of $X$
(this means that the set $\{n\in\w:x\not\in\cup\V_n\}$ is finite for each $x\in X$),
then we obtain the definition of the Hurewicz covering property introduced
in \cite{Hur27}. Contrary to a conjecture of Hurewicz,
the class of  metrizable spaces having the Hurewicz property
turned out  to be  wider than the class of $\sigma$-compact spaces \cite[Theorem~5.1]{COC2}.

Like for most of the topological properties, it is interesting to ask whether the Hurewicz
property is preserved by finite products.
One of the motivations behind this question comes from spaces of continuous functions, see \cite[Theorem~21]{KocSch03}.
 In the case of general topological spaces
there are ZFC examples of Hurewicz spaces whose product is not even Menger, see  \cite[\S 3]{Tod95}
and the discussion in the introduction of \cite{RepZdo17}.
That is why \emph{we concentrate in what follows on subspaces of the Cantor space $2^\w$.}
 (Let us note that
the preservation of the Hurewicz property by finite products of
 metrizable spaces
reduces to  subspaces of $2^\w$, see the end of the proof   of \cite[Theorem~1.1]{RepZdo17} on p. 331 of that paper.)
The covering properties of products of subspaces of $2^\w$ with
 the Hurewicz property  turned out to be  sensitive to
the ambient set-theoretic universe: Under CH there exists a Hurewicz space
whose square is not Menger, see \cite[Theorem~2.12]{COC2}.
Later,  a similar construction has been carried out under a much weaker assumption, see
 \cite[Theorem~43]{TsaSch02}.
 In particular, under the Martin
Axiom there are Hurewicz subspaces of the Cantor space whose product is not Menger.

On the other hand, the product of any two Hurewicz subspaces of $2^\w$ is Menger in the Laver
and Miller models, see \cite{RepZdo17} and \cite{Zdo??}, respectively. In the Miller model we actually know that
 the product of  finitely many Hurewicz subspaces of $2^\w$
is Menger (for the Laver model this is unknown even for three Hurewicz subspaces),
because in this model the Menger property is preserved by products of subspaces of $2^\w$, see \cite{Zdo??}.
This is why the Miller model seemed to be the best candidate for a model where the Hurewicz
property is preserved by finite products of metrizable spaces. Our next theorem
refutes this expectation, and hence the question whether one can find ZFC examples of
Hurewicz subspaces $X,Y$ of $2^\w$ with non-Hurewicz product remains open.

Standardly, by the Miller model we mean a forcing extension of a ground model of GCH
by adding a generic for the forcing obtained by an iteration of length $\w_2$ with countable supports of
the poset defined by Miller in \cite{Mil84}. We recall the definition of this poset in the proof of Lemma~\ref{examples}.

\begin{theorem}  \label{main}
In the Miller model there are two $\gamma$-subspaces $X,Y$ of $2^\w$ such that $X\times Y$
is not Hurewicz. In particular, in this model the Hurewicz property is not preserved by finite products of
metrizable spaces.
\end{theorem}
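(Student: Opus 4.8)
The plan is to do all the combinatorial work in the ground model $V$ (which satisfies GCH, in particular CH) and then transport the two factors, together with a witness of non-Hurewiczness, through the iteration. Recall the standard reformulations for a subspace $Z$ of $2^\w$ (or any zero-dimensional separable metrizable space): $Z$ is Hurewicz iff every continuous image of $Z$ in $\w^\w$ is bounded in $(\w^\w,\le^*)$, and $Z$ is Menger iff no continuous image of $Z$ in $\w^\w$ is dominating. Consequently, $Z$ fails to be Hurewicz precisely when it admits an \emph{unbounded} continuous image in $\w^\w$, and it fails to be Menger precisely when it admits a \emph{dominating} one.

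First I would invoke the ground-model construction (the content of Lemma~\ref{examples}): under CH there are two $\gamma$-subspaces $X,Y$ of $2^\w$ together with a continuous map $f\colon X\times Y\to\w^\w$ whose image $A=f(X\times Y)$ is dominating. In particular $X\times Y$ is not Menger, while each factor, being a $\gamma$-space, is Hurewicz. The delicate part of this step is to reconcile the smallness imposed by the $\gamma$-property ($S_1(\Omega,\Gamma)$) on each factor with the requirement that the product map onto a dominating family; under CH this is arranged by a length-$\w_1$ recursion that simultaneously thins out each factor to secure $S_1(\Omega,\Gamma)$ while coding a scale into the product.

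Next I would use the two forcing-theoretic inputs. The preservation theorem of the paper --- that the countable support iteration defining the Miller model preserves ground-model $\gamma$-spaces --- guarantees that $X$ and $Y$ are still $\gamma$-spaces, hence still Hurewicz, in $V[G]$. This is where essentially all the difficulty lies: a new open $\w$-cover of $X$ in $V[G]$ need not be coded in $V$, so one must show, by a fusion argument on superperfect trees combined with the $\gamma$-property of $X$ in $V$, that from any name for a sequence of $\w$-covers one can read off, below a suitable condition, the required $\gamma$-selection. The second input is the classical fact that the Miller iteration adds no dominating real over $V$. Since $A=f(X\times Y)$ is dominating in $V$, any $g\in V[G]$ with $a\le^* g$ for all $a\in A$ would dominate every ground-model real, and hence be a dominating real over $V$, which does not exist. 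Therefore $A$ remains unbounded in $V[G]$. Note that the points of $X\times Y$ all lie in $V$, so the image of $X\times Y$ in $V[G]$ under the continuous extension of $f$ is exactly this same set $A$.

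Putting the pieces together in $V[G]$: $X$ and $Y$ are $\gamma$-subspaces of $2^\w$, and $X\times Y$ has the unbounded continuous image $A$, so by the reformulation above $X\times Y$ is not Hurewicz; the ``in particular'' clause is then immediate, since $\gamma$-spaces are metrizable and Hurewicz. The main obstacle is thus the preservation theorem, which is proved separately; the present argument is the bookkeeping that combines it with the CH construction and the absence of dominating reals. It is worth remarking that the reasoning is necessarily indirect: $A$ ceases to be dominating in $V[G]$ (the Miller reals are not dominated by $A$), so $X\times Y$ in fact becomes Menger in the extension, in agreement with the productivity of the Menger property in the Miller model recalled in the introduction; only the weaker unboundedness of $A$, and with it the failure of the Hurewicz property, survives.
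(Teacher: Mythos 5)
Your proposal is correct and follows essentially the same route as the paper: take the CH pair of $\gamma$-sets with a continuous map of their product onto a dominating set from the ground model, preserve the $\gamma$-property of each factor through the countable support iteration, and use that the Miller iteration adds no dominating real (equivalently, that $V\cap\w^\w$ stays unbounded) to conclude that the image remains unbounded and hence the product is not Hurewicz. The only slip is attributional: the ground-model construction is not the content of Lemma~\ref{examples} (which verifies $(\dagger)$ for Miller forcing) but is quoted from \cite{MilTsaZdo16}.
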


A family $\U$ of subsets of a set $X$ is called  an \emph{$\w$-cover} of $X$ if $X\not\in\U$ and for every finite subset $F$ of
$X$ there exists $U\in\U$ such that $F\subset U$.
 A space $X$  is  called a \emph{$\gamma$-set} if every open $\w$-cover
 of $X$ contains a $\gamma$-subcover.
This notion was introduced  in
\cite{GerNag82} where it was proved that a Tychonoff space
 $X$ is a $\gamma$-space if and only if
the space $C_p(X)$ of all continuous functions from $X$ to $\mathbb R$ with
the topology of the pointwise convergence, has the Fr\'echet-Urysohn property, i.e.,
for every $f\in C_p(X)$ and $A\subset C_p(X)$ with $f\in\bar{A}$ there exists a sequence
$\la f_n:n\in\w\ra\in A^\w$ converging to $f$.

It is well-known that $\gamma$-spaces have the Hurewicz property in all finite powers,
see, e.g., \cite[Th.~3.6 and Fig. 2]{COC2} and references therein. This follows from the following characterization
proved in \cite{GerNag82}: $X$ is a $\gamma$-space if and only if for every sequence
$\la\U_n:n\in\w\ra$ of open $\w$-covers of $X$ there exists a sequence
$\la U_n\in\U_n:n\in\w\ra$ such that $\{U_n:n\in\w\}$ is a $\gamma$-cover of $X$.

Our proof of Theorem~\ref{main} is based on the fact that if $X\subset 2^\w$,  $X\in V,$
and $X$ is a $\gamma$-space in $V,$ then $X$ remains a $\gamma$-space in the forcing extension
by a countable support iteration of posets satisfying property $(\dagger)$ introduced in
Definition~\ref{def_dag} below.
This seems to be the first attempt to find iterable properties of forcing posets guaranteeing the preservation of
 ground model $\gamma$-spaces. Previously, only specific posets were treated:
By \cite{MilTsaZdo16} and \cite{Sch10} $\gamma$-spaces are preserved by Cohen and random forcing, respectively,
whereas the Hechler forcing kills all ground model uncountable $\gamma$-spaces, see \cite{Mil05}.

Let us note that  Cohen forcing satisfies $(\dagger)$
but fails to preserve Hurewicz spaces, see the discussion in \cite{MilTsaZdo16} after Problem 4.1 therein.
This is why our proof of Theorem~\ref{main} leaves open the following question:

\begin{question}
Does  Miller forcing preserve the Hurewicz property of ground model metrizable spaces containing no topological copies of $2^\w$?
What about Sierpi\'nski spaces?
\end{question}

\section{Proof of Theorem~\ref{main}}

Theorem~\ref{main} is a direct consequence of Lemmata~\ref{just_def},
\ref{dad_impl_gampres}, \ref{dag_preservation}, and \ref{examples} proved below,
combined with one of the main results of \cite{MilTsaZdo16}.
We shall consider only posets $\IP$ such that below any $p\in\IP$ there exist  incompatible $r,q$.
This is not an essential restriction
because most of the posets considered in literature have this property.
First we need to introduce some auxiliary notions.

\begin{definition} \label{def_dag}
\begin{itemize}
\item
A poset $\IP$ has property $(\dagger)$ if for every countable
elementary submodel $M\ni \IP$ of $H(\theta)$ for  big
enough $\theta$, every $p\in \IP\cap M$, and $\phi_i : \IP\cap M\to\IP\cap M$  for
all $i\in\w$ such that $\phi_i(p)\leq p$ for all $p\in\IP\cap M$ and
$i\in\w$, there exists an $(M,\IP)$-generic $q\leq p$ forcing
$$\name{G}\cap\{\phi_i(p):p\in M\cap\IP\} \mbox{ is infinite for all }i\in\w,$$
 where $\name{G}$ is the canonical $\IP$-name for the
$\IP$-generic filter.
\item
Let $\B=\{B_n:n\in\w\}$ be a bijective enumeration of  the standard
clopen base of the topology on $2^\w$, i.e., $\B$ consists of finite
unions of elements of the family $\big\{[s]=\{x\in 2^\w:x\uhr|s|=s\}:s\in 2^{<\w}\big\}$.
 Let $X\subset 2^\w$ and $M\ni X$ be as above.
$\W\subset\B$ is called $\la X,M,\w\ra$-hitting if $\W\cap\U$ is
infinite for every $\w$-cover $\U$ of $X$ such that $\U\in M$ and
$\U\subset\B$.
\item
The poset $\IP$ is called \emph{$\la X,\gamma\ra$-preserving} if for
every countable elementary submodel $M$ such that  $X,\IP\in M$,  $\la
X,M,\w\ra$-hitting $\W\subset \B$, and $p\in \IP\cap M$ there exists
an $(M,\IP)$-generic condition $q\leq p$ forcing $\W$ to be $\la X,
M[\name{G}],\w\ra$-hitting.
\end{itemize}
\end{definition}

In what follows we shall denote by $\Omega(X)$ and $\Gamma(X)$ the family of all
open $\w$- and $\gamma$-covers of a topological space $X$, respectively.
The following lemma justifies our terminology.

\begin{lemma} \label{just_def}
If $\IP$ is  $\la X,\gamma\ra$-preserving and $X\subset 2^\w$ is a
$\gamma$-set, then $X$ remains a $\gamma$-set in $V^{\IP}$.
\end{lemma}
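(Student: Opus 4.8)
The plan is to verify the definition of a $\gamma$-set directly in $V^{\IP}$. Adopting the usual convention that $\w$-covers have no finite subcover (otherwise there is nothing to prove, as a $\gamma$-subcover is required to be infinite), I first reduce to covers drawn from $\B$: given an open $\w$-cover $\U$ of $X$ in $V^{\IP}$, the family $\U^\B=\{B\in\B:B\sbst U\text{ for some }U\in\U\}$ is again an $\w$-cover of $X$ without finite subcover (for finite $F\sbst X$ choose $U\in\U$ with $F\sbst U$ and, for each $x\in F$, a basic clopen neighbourhood $[s_x]\sbst U$ of $x$; then $\bigcup_{x\in F}[s_x]\in\B$), and a $\gamma$-subcover $\W'\sbst\U^\B$ yields one inside $\U$ by selecting $U_B\spst B$ for $B\in\W'$. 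Hence it suffices to find, for every $p$ forcing that $\name{\U}\sbst\B$ is an $\w$-cover of $X$, some $q\leq p$ forcing $\name{\U}$ to contain a $\gamma$-subcover.

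Fix such $p$ and $\name{\U}$, and pick a countable elementary submodel $M\ni X,\IP,\name{\U},p$ of $H(\theta)$. The linchpin is the elementary remark that \emph{every infinite subfamily of a $\gamma$-cover is itself a $\gamma$-cover} (a point lying outside infinitely many members of the subfamily would lie outside infinitely many members of the whole cover). Thus it is enough to produce a single $\W\sbst\B$ \emph{in $V$} that is simultaneously a $\gamma$-cover of $X$ and $\la X,M,\w\ra$-hitting. Granting this, I apply the hypothesis that $\IP$ is $\la X,\gamma\ra$-preserving to $\W$ and $p$, obtaining an $(M,\IP)$-generic $q\leq p$ that forces $\W$ to be $\la X,M[\name{G}],\w\ra$-hitting. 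Since $\name{\U}[\name{G}]\in M[\name{G}]$ is forced to be an $\w$-cover contained in $\B$, hitting gives that $\W\cap\name{\U}[\name{G}]$ is infinite; as it is an infinite subfamily of the $\gamma$-cover $\W$ (and being a $\gamma$-cover of the fixed set $X$ by the fixed family $\W$ is absolute between $V$ and $V^{\IP}$), it is a $\gamma$-cover of $X$ contained in $\name{\U}[\name{G}]$, i.e. the desired $\gamma$-subcover. A density argument over all $p$, together with the reduction above, then yields the lemma.

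It remains to build $\W$ in the ground model, and this is where the main difficulty lies, since the hitting condition (membership in each of the countably many $\w$-covers of $M$) and the $\gamma$-cover condition (eventual containment of every point) pull in different directions and cannot be met by naively superposing $\gamma$-subcovers of the individual covers. Here I use that $X$ is a $\gamma$-set in $V$ through the Gerlits--Nagy selection principle recalled before the lemma. Enumerate the $\w$-covers of $X$ by members of $\B$ lying in $M$ as $\la\U^{(k)}:k\in\w\ra$, and form a sequence $\la\cV_m:m\in\w\ra$ of $\w$-covers in which each $\U^{(k)}$ occurs infinitely often; the selection principle furnishes $V_m\in\cV_m$ with $\W:=\{V_m:m\in\w\}$ a $\gamma$-cover of $X$. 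This $\W$ is automatically $\la X,M,\w\ra$-hitting: fixing $k$ and writing $I_k=\{m:\cV_m=\U^{(k)}\}$, the subsequence $\la V_m:m\in I_k\ra$ inherits the property that each $x\in X$ lies in all but finitely many of its terms, so if its range were finite then some single member would contain every point of $X$, contradicting that $\U^{(k)}$ has no finite subcover; hence $\W\cap\U^{(k)}\spst\{V_m:m\in I_k\}$ is infinite. The only delicate points are the routine $\w$-cover bookkeeping (that $\U^\B$ and the traces behave as claimed), which the no-finite-subcover normalization takes care of, rather than the forcing part, which is supplied by $\la X,\gamma\ra$-preservation.
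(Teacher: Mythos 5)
Your proof is correct and follows essentially the same route as the paper: pass to covers by elements of $\B$, use the Gerlits--Nagy selection principle on the $\w$-covers lying in a countable elementary submodel $M$ to build a single $\gamma$-cover $\W$ that is $\la X,M,\w\ra$-hitting, and then invoke $\la X,\gamma\ra$-preservation together with the fact that infinite subfamilies of $\gamma$-covers are $\gamma$-covers. You are in fact more careful than the paper on one point, namely repeating each cover infinitely often in the enumeration so that $\W\cap\U^{(k)}$ is genuinely infinite rather than merely nonempty.
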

\begin{proof}
Let $\name{\U}$ be a $\IP$-name for an $\w$-cover of $X$ by elements
of $\B$, $p\in\IP,$ and $M\ni\name{\U},p$ be a countable elementary
submodel. Let $\{\U_i:i\in\w\}$ be an enumeration of $\Omega(X)\cap
M\cap\mathcal P(\B)$ and $U_i\in\U_i$ be such that
$\W=\{U_i:i\in\w\}\in\Gamma(X)$. Then  $\W $ is $\la
X,M,\w\ra$-hitting,  and hence there exists an $(M,\IP)$-generic
$q\leq p$ forcing $\W\cap\name{\U}$ to be infinite. Thus $q$ forces
$\W\cap\name{\U}$ to be a $\gamma$-subcover of $\name{\U}$.
\end{proof}

\begin{lemma} \label{dad_impl_gampres}
If $\IP$ satisfies $(\dagger)$, then it is  $\la
X,\gamma\ra$-preserving for every $X\subset 2^\w$.
\end{lemma}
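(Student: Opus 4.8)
The plan is to fix a countable elementary submodel $M\ni X,\IP$, a $\la X,M,\w\ra$-hitting family $\W\sbst\B$, and a condition $p\in\IP\cap M$, and to produce the required generic $q\le p$ by feeding a carefully chosen sequence of maps into property $(\dagger)$. Since $M$ is countable, I first fix an enumeration $\{\name{\U}_i:i\in\w\}$ of all $\IP$-names in $M$ for subsets of $\B$; every $\w$-cover of $X$ by elements of $\B$ lying in $M[\name{G}]$ is $\name{\U}_i^{\name{G}}$ for some $i$. To remove a case distinction later, I would replace each $\name{\U}_i$ by a name $\name{\V}_i\in M$ forced by $1_\IP$ to be an $\w$-cover of $X$ by elements of $\B$, and satisfying $\name{\V}_i=\name{\U}_i$ whenever $\name{\U}_i$ is an $\w$-cover (let $\name{\V}_i$ evaluate to a fixed ground model $\w$-cover $\U_0\sbst\B$ of $X$ with $\U_0\in M$ otherwise; such $\U_0$ exists routinely). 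Since every $\w$-cover $\U\sbst\B$ of $X$ with $\U\in M[\name{G}]$ then equals $\name{\V}_i^{\name{G}}$ for a suitable $i$, it will suffice to force $\W\cap\name{\V}_i^{\name{G}}$ to be infinite for every $i$.

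The combinatorial heart of the argument is a density fact, and this is where the hitting property of $\W$ enters. Writing $\W=\{W_k:k\in\w\}$, fix $i,m\in\w$ and set
$$D_{i,m}=\{r\in\IP\cap M: r\forces W_k\in\name{\V}_i \text{ for some } k\ge m\}.$$
I claim $D_{i,m}$ is dense in $\IP\cap M$. Given $p'\in\IP\cap M$, consider $\U'_{p'}=\{B\in\B:\exists r\le p'\ (r\forces B\in\name{\V}_i)\}$, defined using the forcing relation as computed in $M$; by elementarity it coincides with the set defined in $V$, belongs to $M$, and is contained in $\B$. Because $p'\forces\name{\V}_i$ is an $\w$-cover of $X$, for every finite $F\sbst X$ some $r\le p'$ forces some $B\in\B$ with $F\sbst B$ to belong to $\name{\V}_i$, so $\U'_{p'}$ is an $\w$-cover of $X$ lying in $M$. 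As $\W$ is $\la X,M,\w\ra$-hitting, $\W\cap\U'_{p'}$ is infinite, so there is $k\ge m$ with $W_k\in\U'_{p'}$; unravelling the definition (again via elementarity) yields $r\in\IP\cap M$ with $r\le p'$ and $r\forces W_k\in\name{\V}_i$, i.e.\ $r\in D_{i,m}$.

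With density in hand, I would define $\phi_{i,m}:\IP\cap M\to\IP\cap M$ by letting $\phi_{i,m}(p')$ be the first element of $D_{i,m}$ below $p'$ in a fixed enumeration of $\IP\cap M$, so that $\phi_{i,m}(p')\le p'$ and $\{\phi_{i,m}(p'):p'\in\IP\cap M\}\sbst D_{i,m}$. Re-indexing $\{\phi_{i,m}:i,m\in\w\}$ as $\{\phi_j:j\in\w\}$ and applying $(\dagger)$ to $p$ and this sequence produces an $(M,\IP)$-generic $q\le p$ forcing $\name{G}\cap\{\phi_{i,m}(p'):p'\in\IP\cap M\}$ to be infinite, hence nonempty, for all $i,m$. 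Thus $q$ forces that for all $i,m$ there is $r\in\name{G}\cap D_{i,m}$, which forces some $W_k\in\name{\V}_i$ with $k\ge m$; letting $m$ vary gives $\W\cap\name{\V}_i^{\name{G}}$ infinite for every $i$. By the reduction of the first paragraph, $q$ forces $\W$ to be $\la X,M[\name{G}],\w\ra$-hitting, which is exactly what $\la X,\gamma\ra$-preservation demands.

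The step I expect to be most delicate is the density of $D_{i,m}$, specifically the passage between $M$ and $V$: one must check that $\U'_{p'}$ genuinely is an $\w$-cover of $X$ (covering all finite $F\sbst X$, including those outside $M$) while still belonging to $M$, and that the witnessing conditions can be located inside $M$. The essential role of $(\dagger)$, as opposed to mere $(M,\IP)$-genericity, is precisely that $D_{i,m}$ is dense in $\IP\cap M$ but need not be dense in $\IP$, so ordinary genericity would not force $\name{G}\cap D_{i,m}\ne\emptyset$; the maps $\phi_{i,m}$ repackage this $M$-internal density into ranges that $(\dagger)$ is designed to meet.
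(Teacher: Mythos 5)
Your proof is correct and follows essentially the same route as the paper's: both observe that $\{B\in\B:\exists r\le p\,(r\forces B\in\name{\U}_i)\}$ is an $\w$-cover of $X$ lying in $M$, use the hitting property of $\W$ to select, densely in $\IP\cap M$, conditions in $M$ forcing members of $\W$ into the named cover, and feed these selectors into $(\dagger)$. Your two deviations --- normalizing each name to one forced to be an $\w$-cover instead of splitting into cases, and indexing the maps by pairs $(i,m)$ with the tail requirement $k\ge m$ instead of deleting $\{B_k:k\le n\}$ from $\U_{i,p_n}$ to secure infinitude of the intersection --- are cosmetic.
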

\begin{proof}
Let us enumerate $V^{\IP}\cap M$ as   $\{\name{\U}_i:i\in\w\}$. For
every $p\in\IP\cap M$ and $i\in\w$, if $p$
 does not force $\name{\U}_i$ to be an $\w$-cover of $X$ consisting of elements of $\B$, we can find $r_{i,p}\leq p$
which forces that $\name{\U}_i$ is not an $\w$-cover of $X$ by
elements of $\B$. Otherwise we set $\U_{i,p}=\{B\in\B:\exists r\leq p
(r\forces B\in\name{\U}_i)\}$ and note that
$\U_{i,p}\in\Omega(X)\cap M$. Furthermore,
 by the elementarity we have that for every $B\in\U_{i,p}$
there exists $M\ni r\leq p$ such that $r\forces B\in\name{\U}_i$.
 Let $\{p_n:n\in\w\}$ be an enumeration of
 $M\cap\IP$ and for every $n,i$ set $\U'_{i,p_n}=\U_{i,p_n}\setminus \{B_k : k\leq n\}$.
Since $\W$ is $\la X,M,\w\ra$-hitting, $|\W\cap\U'_{i,p}|=\w$ for
every $p\in M\cap \IP$ and $i\in\w$. For every $p,i$ as above pick
$U_{i,p}\in\W\cap\U'_{i,p}$ and $r_{i,p}\leq p$ such that
$r_{i,p}\in M$ and $r_{i,p}\forces U_{i,p}\in\name{\U}_i$.

Now let us fix $p_*\in \IP\cap M$ and  consider   maps
$\phi_i:p\mapsto r_{i,p}$, $i\in\w$. It follows that there exists an
$(M,\IP)$-generic $q\leq p_*$ forcing the set
$\name{G}\cap\{r_{i,p}:p\in\IP\cap M\}$ to be infinite for all
$i\in\w$. Let $G\ni q$ be $\IP$-generic and $i\in\w$. If
$\name{\U}_i^G$ is an $\w$-cover of $X$ by elements of $\B$, then no
$r_{i,p}\in G$ can force the negation thereof, and hence for each
such $r_{i,p}$ we have $U_{i,p}\in\W\cap\name{\U}_i^G$. Therefore
$|\W\cap \name{\U}_i^G|=\w$ since no $B\in\B$ can belong to
$\U'_{i,p}$ for infinitely many $p\in M\cap\IP$.
 \end{proof}

\noindent\textbf{Remark.}
It is a simple exercise to check that if in the definition of $(\dagger)$ we restrict ourselves
to only one $\phi:M\cap\IP\to M\cap\IP$ then we get an equivalent statement.
The longer formulation which we have chosen seems to be easier to apply, though.
\hfill $\Box$
\medskip

By the definition  we have that for every $X\subset 2^\w$ finite
iterations of  $\la X,\gamma\ra$-preserving posets are again $\la
X,\gamma\ra$-preserving. The proof of the next fact is modelled after that of
\cite[Lemma~2.8]{Abr10}. In fact, we just ``add an $\epsilon$'' to it, using ideas from  \cite{Dow90}.

\begin{lemma} \label{dag_preservation}
Let $X\subset 2^\w$. Then countable support iterations of $\la
X,\gamma\ra$-preserving posets are again $\la
X,\gamma\ra$-preserving.
\end{lemma}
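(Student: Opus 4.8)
The plan is to prove the statement by induction on the length $\alpha$ of the iteration $\la\IP_\xi,\name{\IQ}_\xi:\xi<\alpha\ra$. Since $\la X,\gamma\ra$-preservation entails properness (it produces $(M,\IP)$-generic conditions), the iteration is proper and the usual master-condition/fusion machinery of Abraham and Shelah is available; the task is to carry the extra hitting clause through that machinery. A bare induction on $\la X,\gamma\ra$-preservation itself does not go through the limit step, so I would strengthen the hypothesis to an \emph{amalgamation} form: for every countable $M\prec H(\theta)$ with $X,\la\IP_\xi,\name{\IQ}_\xi\ra\in M$, every $\la X,M,\w\ra$-hitting $\W\sbst\B$, every $\beta\le\alpha$ with $\beta\in M$, every $p\in\IP_\alpha\cap M$, and every $(M,\IP_\beta)$-generic $q_\beta\le p\uhr\beta$ that forces $\W$ to be $\la X,M[\name G_\beta],\w\ra$-hitting and forces $p$ into the generic, there is an $(M,\IP_\alpha)$-generic $q\le p$ with $q\uhr\beta=q_\beta$ forcing $\W$ to be $\la X,M[\name G_\alpha],\w\ra$-hitting. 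Taking $\beta=0$ (so that $M[\name G_0]=M$) recovers exactly the assertion that $\IP_\alpha$ is $\la X,\gamma\ra$-preserving.

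At a successor $\alpha=\beta+1$ I would apply the inductive hypothesis at $\beta$ to obtain an $(M,\IP_\beta)$-generic $q'$ reinstating hitting at level $M[\name G_\beta]$, and then extend $q'$ on the last coordinate using the single-step $\la X,\gamma\ra$-preservation of $\name{\IQ}_\beta$: working inside $M[\name G_\beta]$, which is a countable elementary submodel of the extension and still sees $\W$ as $\la X,M[\name G_\beta],\w\ra$-hitting, elementarity gives an $(M[\name G_\beta],\name{\IQ}_\beta)$-generic extension re-establishing the hitting property, and concatenation yields the required $q$. Limits of uncountable cofinality are routine: since $M$ is countable, $\sup(M\cap\alpha)<\alpha$, every relevant condition and name in $M$ is supported below some $\xi\in M\cap\alpha$, and the case collapses to the inductive hypothesis at a bounded stage.

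The genuine work is the limit case $\cf(\alpha)=\w$, where I would run a fusion enriched by Dow's ``$\e$''. Fix an increasing cofinal $\la\alpha_n:n\in\w\ra$ in $\alpha$ with $\alpha_n\in M$ and $\alpha_0=\beta$, and bookkeep the dense subsets of $\IP_\alpha$ lying in $M$ together with the $\IP_\alpha$-names in $M$ for $\w$-covers of $X$ by elements of $\B$, each name listed infinitely often. I would build a fusion $q_\beta=q^0\ge q^1\ge\cdots$ with $q^n$ an $(M,\IP_{\alpha_n})$-generic condition supplied by the inductive hypothesis and $q^{n+1}\uhr\alpha_n=q^n$, so that $q=\bigcup_n q^n$ (with support $\bigcup_n\supp(q^n)$) is a genuine condition and $(M,\IP_\alpha)$-generic, exactly as in the proper-iteration theorem. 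To secure hitting, at stage $n$, if the $n$-th name $\name\U$ is forced by $q^n$ to be an $\w$-cover of $X$ by elements of $\B$, I would pass to the ground-model cover
\[
\U^{(n)}=\{B\in\B:\exists\,r\le q^n\ (r\forces B\in\name\U)\}\in\Omega(X)\cap M,
\]
thin it to $\U^{(n)}\sm\{B_k:k\le n\}$ — this finite excision is the ``$\e$'' — and use that $\W$ is $\la X,M,\w\ra$-hitting to choose a \emph{fresh} $B\in\W\cap(\U^{(n)}\sm\{B_k:k\le n\})$ together with $r\le q^n$ in $M$ forcing $B\in\name\U$; I then take $q^{n+1}\le r$. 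Because each cover is revisited infinitely often and the excision forces a new basic set each time, every $\w$-cover of $X$ appearing in $M[\name G_\alpha]$ meets $\W$ infinitely often. The main obstacle is precisely this reconciliation: a single fusion must simultaneously meet all dense sets of $M$ (genericity) and hit infinitely often each of the infinitely many $\w$-covers that only become visible in the final extension; the amalgamation form of the hypothesis lets every step extend the previous generic condition without disturbing the hits already placed, while the $\e$-thinning guarantees that repeated visits to the same cover accumulate distinct elements of $\W$, and I expect the careful interleaving of these two demands to be the technically delicate point.
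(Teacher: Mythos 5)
Your overall architecture matches the paper's: a strengthened amalgamation statement carried by induction, an Abraham-style fusion at limits of countable cofinality with each name listed infinitely often, and Dow's finite excision to force fresh elements of $\W$ at each revisit. But there is a genuine gap at the heart of the limit step, exactly where the hitting property gets applied. As written, the set $\U^{(n)}=\{B\in\B:\exists r\le q^n\,(r\forces B\in\name{\U})\}$ is not in $M$: the condition $q^n$ is an $(M,\IP_{\alpha_n})$-generic master condition, hence not an element of $M$, so elementarity gives you neither $\U^{(n)}\in M$ nor a witness $r\in M$ below $q^n$ (a master condition typically has no extensions lying in $M$ at all). The hypothesis that $\W$ is $\la X,M,\w\ra$-hitting therefore does not apply to $\U^{(n)}$. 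Moreover, the instruction ``take $q^{n+1}\le r$'' is incompatible with the fusion requirement $q^{n+1}\uhr\alpha_n=q^n$ unless $r\uhr\alpha_n$ is already forced into the $\IP_{\alpha_n}$-generic below $q^n$.

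The natural repair --- replace $q^n$ by the ground-model side condition $p_n\in\IP_\alpha\cap M$ being fused into the generic, so that $\U_{p_n}=\{B\in\B:\exists r\le p_n\,(r\forces B\in\name{\U})\}$ does lie in $\Omega(X)\cap M$ --- still does not close the gap: a witness $r\le p_n$ for $B\in\W\cap\U_{p_n}$ obtained from hitting over $M$ may have $r\uhr\alpha_{n+1}$ incompatible with the master condition built so far, and then $r$ cannot be threaded into the fusion. The collection of those $B$ that admit a witness $r$ whose restriction lies in the generic is only a $\IP_{\alpha_{n+1}}$-name, i.e., an $\w$-cover living in $M[\name{G}_{\alpha_{n+1}}]$ rather than in $M$. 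This is precisely why the paper's inductive statement insists that each intermediate $q_{n+1}$ force $\W$ to be $\la X,M[\name{G}_{\alpha_{n+1}}],\w\ra$-hitting, and why its key object is the $\IP_{\alpha_{n+1}}$-name $\name{\W}_{n+1}$ for that projected cover: hitting is applied to $\name{\W}_{n+1}$ inside the intermediate extension, and only then is a witness $p\le p_n'$ with $p\uhr\alpha_{n+1}$ in the generic pulled back into $M$ by elementarity. Your amalgamation hypothesis contains exactly the clause needed for this, but your construction never invokes it --- you only ever use hitting over $M$ itself --- and without the intermediate-model application the fusion constraint and the hitting requirement cannot be reconciled.
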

\begin{proof}
We shall inductively prove the following formally stronger statement:
\begin{quote}
Let   $\la
\IP_\alpha,\name{\IQ}_\alpha:\alpha<\delta \ra$ be a  countable support iteration of $\la X,\gamma\ra$-preserving posets,
 $M$  a countable elementary submodel of $ H(\lambda)$ for a sufficiently large regular cardinal $\lambda$ such that $\delta,\IP_\delta\in M$, and $\W\subset\B$ be $\la X,M,\w\ra$-hitting.
For any $\delta_0\in\delta\cap M$ and $(M,\IP_{\delta_0})$-generic condition $q_0$  forcing $\W$ to be
$\la X, M[\name{G}_{\delta_0}],\w\ra$-hitting,  the following holds: If
$\name{p}_0\in V^{\IP_{\delta_0}}$ is such that
$$q_0\forces_{\IP_{\delta_0}}\name{p}_0\in\IP_\delta\cap M \mbox{ and } \name{p}_0\uhr\delta_0\in\name{G}_{\delta_0},$$
where $\name{G}_{\delta_0}$ is the canonical name for the $\IP_{\delta_0}$-generic, then there is an $(M,\IP_\delta)$-generic
condition $q$ such that
$$ q\uhr\delta_0=q_0 \mbox{ and } q\forces_{\IP_\delta}\:``\name{p}_0\in\name{G}_\delta \:\wedge\:\W\mbox{ is }\la X,M[\name{G}_\delta],\w\ra\mbox{-hitting.''} $$
\end{quote}
We are going to prove this statement by induction on $\delta$, the only non-trivial case
 (modulo \cite[Lemma~2.6]{Abr10} and the proof
thereof) is when $\delta$ is a limit ordinal.
Fix a  strictly increasing sequence $\la\delta_n:n\in\w\ra$
of ordinals in $M$ cofinal in $M\cap\delta$.
  For every
$\nu<\mu<\delta$ let us denote by $\IP_{[\nu,\mu)}$ a $\IP_{\nu}$-name for the
iteration of $\name{\IQ}_\beta$, $\beta\in\mu\setminus\nu$, in $V^{\IP_{\nu}}$.
As usual,
(see, e.g., \cite{Lav76}) we shall identify  $\IP_{[\nu,\mu)}$ with the set of  all functions $p$ with domain $\mu\setminus \nu$
such that $1_{\IP_\nu}\bigvid p\in\IP_\mu$, ordered as follows:
Given a $\IP_\nu$-generic $G$ and $p_0,p_1\in\IP_{[\nu,\mu)}$,
$p_1^G\leq p_0^G$ in $\IP_{[\nu,\mu)}^G$ if there exists an $s\in G$
such that $s\bigvid p_1\leq s\bigvid p_0$ in $\IP_\mu$.

Set $D_0=\IP_\delta$ and let
$\{D_i:i\geq 1\}$ be the set of all open dense subsets of $\IP_\delta$
which belong to $M$ and $\{\name{\U}_i:i\geq 1\}$  an enumeration
of $V^{\IP_\delta}\cap M$ such that each $\tau\in V^{\IP_\delta}\cap M$
equals $\name{\U}_i$ for infinitely many $i$.    We shall define by
induction on $n\in\w$ a condition $q_n\in\IP_{\delta_n}$ and a name
$\name{p}_n\in V^{\IP_{\delta_n}}$ such that:
\begin{itemize}
\item[$(1)$] $q_0$ and $\name{p}_0$ are like in the quoted claim at the beginning of the proof; $q_n$ is
$(M,\IP_{\delta_n})$-generic; $q_{n+1}\uhr \delta_n=q_n$;
\item[$(2)$]  $\name{p}_n$ is a
$\IP_{\delta_n}$-name such that\\
 \centerline{$q_n\forces_{\IP_{\delta_n}}\  ``\name{p_n} \mbox{ is  a condition in } \IP_\delta\cap M \mbox{ such that }$}
\begin{itemize}
\item[$(a)$] $\name{p}_n\uhr \delta_n\in\name{G}_{\delta_n}$;
\item[$(b)$] $\name{p}_n\leq\name{p}_{n-1}$;
\item[$(c)$] $\name{p}_n\in D_n$; and
\item[$(d)$] If $n\geq 1$ then  $\name{p}_{n}$ decides whether $\name{\U}_n$ is an $\w$-cover of $X$ by elements of
$\B$, and in the case when decided to be such a cover, $\name{p}_n$ forces, in addition, that
$\exists m\geq n
(B_m\in\name{\U_n}\cap\W)$.''
\end{itemize}
\end{itemize}
 Assume that $q_n$ and $\name{p}_n$ have already been constructed.
For a while we shall work in $V[G]$, where  $G\ni q_n$ is $\IP_{\delta_n}$-generic. Then $p_n:=\name{p}_n^{G}\in D_n\cap M$
and $p_n\uhr\delta_n\in G$.
Find $p'_n\leq p_n$ such that $p'_n\uhr \delta_n\in G$ and $p'_n\in D_{n+1}\cap M$.
It exists because the set
$$ D'=\{p'\in\IP_{\delta_n}: (p'\perp p_n\uhr\delta_n)\vee(\exists p_n'\in D_{n+1} (p_n'\leq p_n \wedge p'=p_n'\uhr\delta_n))\} $$
is dense in $\IP_{\delta_n}$ and belongs to $M$, and hence $D'\cap M$ is predense below $q_n$, which yields $D'\cap G\cap M\neq\emptyset$.
Moreover, since $p_n\uhr\delta_n\in G$, any $p'\in D'\cap G$ is compatible with $p_n\uhr\delta_n$.
It follows that for any $p'\in G\cap D'\cap M$, any $p_n'\in M$ witnessing for $p'\in D'$ is as required.

Without loss of generality we may assume that each condition in $D_{n+1}$ decides whether $\name{\U}_{n+1}$ is an $\w$-cover of
$X$ by elements of $\B$. If $p'_n$ decides that it is not, then we set $p_{n+1}=p'_n$
and take $q_{n+1}$ to be any $(M,\IP_{\delta_{n+1}})$-generic  satisfying $(1), (2)$ and forcing over $\IP_{\delta_{n+1}}$
that $\mathcal W$ is $\la X,M[\name{G}_{\delta_{n+1}}],\w\ra$-hitting, its existence following by our inductive assumption.
Otherwise fix a $\IP_{\delta_{n}}$-name $\name{p}'_n\in M$ for a condition in $\IP_\delta$ such that $q_n$ forces that $\name{p}'_n$
has all the properties of $p'_n$ stated above, and
an $(M,\IP_{\delta_{i+1}})$-generic  $q_{n+1}$ such that $q_{n+1}\uhr\delta_n=q_n$,
 $q_{n+1}\forces_{\IP_{\delta_{n+1}}} \name{p}'_n\uhr\delta_{n+1}\in\name{G}_{\delta_{n+1}}$,
and $q_{n+1}\forces_{\IP_{\delta_{n+1}}}`` \W$ is $\la X, M[\name{G}_{\delta_{n+1}}],\w\ra$-hitting''.
Consider the $\IP_{\delta_{n+1}}$-name $\name{\W}_{n+1}$ which equals
\begin{eqnarray*}
 \big\{ \la r,\check{B}\ra\: :\: B\in\mathcal B \ \ \&\ \  \IP_{\delta_{n+1}}\ni r \mbox{ decides } \name{p}_n' \mbox{ as } p_n'\ \  \& \\
 \mbox{ exists } p\leq p_n' \mbox{ such that } p\uhr\delta_{n+1}=r \mbox{ and } p\forces_{\IP_\delta}\check{B}\in\name{\U}_{n+1}     \big\}.
\end{eqnarray*}
It follows that $\name{\W}_{n+1}\in M$ is a $\IP_{\delta_{n+1}}$-name which is forced by $q_{n+1}$ to be  an $\w$-cover of $X$ by elements of $\B$,
and hence  $q_{n+1}\forces_{\IP_{\delta_{n+1}}} |\W\cap\name{\W}_{n+1}|=\w$.
Let $H\ni q_{n+1}$ be $\IP_{\delta_{n+1}}$-generic over $V$ and $p'_n$  the interpretation $(\name{p}'_n)^H$.
Now we shall work in $V[H]$ for a while. It follows from the above that there exists $m>n$ such that
$B_m\in\W\cap\name{\W}_{n+1}^H$. Consequently, there exist
$r\in H$ and $p\leq p_n'$ such that $p\uhr\delta_{n+1}=r$ and $p\forces_{\IP_\delta}\check{B}_m\in\name{\U}_{n+1}$.
By  elementarity we can find such $r$ in $M$ (note that $M[H]\cap \IP_{\delta_{n+1}}= M\cap \IP_{\delta_{n+1}}$),
and hence  we can also find $p\in M$ as above.
Now  let $\name{p}_{n+1}\in M$  be a $\IP_{\delta_{n+1}}$-name
such that $q_{n+1}$ forces that $\name{p}_{n+1}$ has all the properties of $p$  stated above.
Its existence follows by the maximality principle.
This completes our inductive construction.

 Exactly as in the proof of
\cite[Lemma 2.8]{Abr10} one can verify that $q=\bigcup_{n\in\w}q_n$ is $(M,\IP_\delta)$-generic.
More precisely, it is easy to see by induction on $n$ that $q$ forces over $\IP_\delta$
that $\name{p}_{n+1}\leq \name{p}_n\in \name{G}_{\delta}\cap M$ for all $n\in\w$. Using this we are going to prove
 that each $D_n\cap M$ is predense below $q$. Suppose not. Then we can  find
$q'\leq q$ which is incompatible with all elements of $D_n\cap M$ for some $n\in\w$. Let $H\ni q'$ be $\IP_\delta$-generic.
Then $p_n:=\name{p}_n^H\in H\cap M\cap D_n$ by $(2)$, and hence $p_n$ is a condition in $D_n\cap M$
compatible with $q$ (because $q\in H$), a contradiction.

It suffices to note that
 $(2)(d)$ clearly ensures that
$q$ forces $\W$ to be $\la X, M[\name{G}_\delta],\w\ra$-hitting.
This completes our proof.
\end{proof}

\begin{lemma} \label{examples}
The Miller, Sacks, and Cohen  posets satisfy $(\dagger)$.
\end{lemma}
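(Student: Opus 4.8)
The plan is to verify $(\dagger)$ separately for the three posets, treating Cohen as a warm-up and then running a single fusion argument common to Sacks and Miller, with Miller needing the most care. Throughout I fix a countable $M\prec H(\theta)$ with $\IP\in M$, a condition $p\in\IP\cap M$, and maps $\phi_i:\IP\cap M\to\IP\cap M$ with $\phi_i(p)\le p$; write $E_i=\{\phi_i(p):p\in\IP\cap M\}=\range(\phi_i)$. The one observation I use everywhere is that each $E_i$ is dense in the relative order $\IP\cap M$: for any $r\in\IP\cap M$ we have $\phi_i(r)\le r$ and $\phi_i(r)\in E_i$. Note also that the $\phi_i$ themselves need not lie in $M$, but every individual value $\phi_i(r)$ does, by the range hypothesis — this is what makes the construction stay inside $M$ at each finite stage.

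For Cohen forcing $\IP$ is countable and belongs to $M$, so $\IP\cap M=\IP$ and each $E_i$ is genuinely dense in all of $\IP$. Hence for every $n$ the set $\{t\in E_i:\len(t)\ge n\}$ is dense in $\IP$ (first extend a given condition to length $\ge n$, then into $E_i$), so \emph{every} condition forces $\name G$ to meet it; as these sets contain elements of unbounded length, $p$ already forces $\name G\cap E_i$ to be infinite for every $i$. Since Cohen forcing is ccc, every condition, and in particular $p$ itself, is $(M,\IP)$-generic, so $q=p$ works.

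For Sacks and Miller I build $q$ as the limit of a fusion $T_0=p\ge_0 T_1\ge_1\cdots$ with all $T_n\in\IP\cap M$, writing $(T)_t=\{u\in T:u\subseteq t\text{ or }t\subseteq u\}$ for the subtree through a node $t$, and interleaving along a bookkeeping that lists, cofinally often, (i) each open dense $D\in M$ and (ii) each index $i\in\w$. Tasks of type (i) are handled exactly as in the standard properness proofs (cf.\ \cite[Lemma~2.6]{Abr10}): below every frozen splitting node one steers the relevant subtree, inside $M$, into $D$, which secures $(M,\IP)$-genericity of $q$. For a task of type (ii) I use the hypothesis to hit $E_i$: if $s$ is a frozen splitting node of the current tree $T\in\IP\cap M$, then for each kept immediate successor $s^\frown m$ the subtree $(T)_{s^\frown m}$ is again a condition in $\IP\cap M$ (both $T$ and $s^\frown m$ lie in $M$), so $\phi_i\big((T)_{s^\frown m}\big)\le (T)_{s^\frown m}$ is a condition in $\IP\cap M$, by which I replace the subtree above $s^\frown m$ (after a fusion-admissible thinning). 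The decisive point is that each such step alters $T$ only below finitely many nodes and only by inserting finitely many values of the $\phi_i$, each lying in $M$; hence the new tree again lies in $M$. Because the bookkeeping assigns $i$ to infinitely many splitting levels, every branch of $q$ passes through infinitely many kept nodes $s^\frown m$ with $(q)_{s^\frown m}\le\phi_i(\cdot)\in E_i$, so that whenever the generic branch runs through such a node the corresponding value of $\phi_i$ enters $\name G$; thus $q$ forces $\name G\cap E_i$ to be infinite for every $i$.

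For Sacks this is immediate: a splitting level consists of finitely many nodes, each with two kept successors, so a single fusion step refines all of them by the scheduled $\phi_i$ while remaining in $M$. The main obstacle is Miller, where each splitting node has infinitely many immediate successors and every splitting level past the stem contains infinitely many splitting nodes; refining below all of them at once would amount to gluing infinitely many elements of $M$ and would leave $M$. I resolve this by spreading the successor-refinements across infinitely many fusion stages — processing, in Miller-fusion order, one splitting node and one of its kept successors per stage — and by applying the scheduled $\phi_i$ to $(T)_{s^\frown m}$ at the moment I first descend below $s^\frown m$, \emph{before} freezing the first splitting node of the resulting superperfect subtree. This keeps every stage a finite modification (hence inside $M$), preserves superperfectness in the limit since each frozen splitting node retains infinitely many kept successors, and still guarantees that the generic branch meets a $\phi_i$-refinement at every splitting level assigned to $i$. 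Checking that this interleaving can be carried out simultaneously with the density tasks of type (i) — which in the Miller case already require the same one-successor-at-a-time spreading — and that the fusion limit is a genuine Miller condition, is the technical heart of the argument.
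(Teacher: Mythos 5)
Your proposal is correct in substance and, for Cohen and Sacks, matches what the paper does (the paper dismisses Cohen as trivial and Sacks as analogous to Miller). Where you genuinely diverge is at exactly the point you flag as ``the technical heart'': how to refine below the infinitely many successors of the infinitely many splitting nodes of a given splitting level of a Miller tree without leaving $M$. You resolve this with the classical pair-by-pair bookkeeping, keeping every stage $T_n$ of the fusion inside $M$ by performing only a finite modification per step and spreading the work over infinitely many steps. The paper instead drops the requirement that $T_{n+1}\in M$ altogether: at stage $n$ it simultaneously replaces, for \emph{every} level-$n$ splitting node $t$ and \emph{every} successor $t\vid k$, the subtree $(T_n)_{t\vid k}$ by some $R_{t,k}\le\phi_n\big((T_n)_{t\vid k}\big)$ with $R_{t,k}\in D_n\cap M$, and sets $T_{n+1}=\bigcup_{t,k}R_{t,k}$. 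This infinite union need not lie in $M$, but the only thing the induction needs is that each local subtree $(T_{n+1})_r$, for $r$ a level-$(n+1)$ splitting node, lies in $M$ --- and it does, being a subtree of a single $R_{t,k}\in M$ rooted at a node $r\in M$. That one observation eliminates the interleaving and scheduling machinery entirely (the recurrence of each $\phi$ is obtained by the harmless assumption that each $\phi$ occurs as $\phi_n$ for infinitely many $n$), and it makes immediate both the predensity of $D_n\cap M$ below $T=\bigcap_{n\in\w}T_n$ and the fact that every branch of $T$ passes, at each level, through a node whose subtree lies below an element of $\range(\phi_n)$. Your route works too --- it is the standard Miller fusion --- but it carries the burden of verifying that the one-pair-at-a-time schedule eventually covers every kept successor and still meets every dense set and every $E_i$ cofinally along every branch, which is precisely the verification you leave open; the paper's device makes that verification unnecessary.
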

\begin{proof}
We shall present the proof only for  Miller forcing because it is exactly what is needed for the
proof of Theorem~\ref{main}, and because  the Sacks case is completely analogous, whereas
the Cohen one  is trivial.

Before we pass to the proof,
let us  recall the definition of  Miller forcing and fix our notation.
By a Miller tree we understand a subtree $T$ of $\w^{<\w}$ consisting of increasing finite sequences
such that the following conditions are satisfied:
\begin{itemize}
 \item Every $t\in T$ has an extension $s\in T$ which is splitting in $T$, i.e.,
there are more than one immediate successors of $s$ in $T$;
\item If $s$ is splitting in $T$, then it has infinitely many immediate successors in $T$.
\end{itemize}
  Miller forcing is the collection $\mathbb M$ of all  Miller trees ordered
by inclusion, i.e.,
smaller trees carry more information about the generic.
This poset was introduced in \cite{Mil84}.

For a Miller tree $T$ we  denote by $\Split(T)$ the set of all splitting nodes
of $T$, and for some $t\in\Split(T)$ we denote the size of $\{s\in\Split(T):s\subsetneq t\}$
by $\Lev(t,T)$. For a node $t$ in a Miller tree $T$ we denote by $T_t$
the set $\{s\in T:s$ is compatible with $t\}$. It is clear that $T_t$ is also a Miller
tree.
If $T_1\leq T_0 $ and each $t\in \Split(T_0)$ with $\Lev(t,T_0)\leq k$ belongs to $\Split(T_1)$, where $k\in\w$,
 then we  write
$T_1\leq_k T_0$. It is easy to check (and is well-known) that if $T_{n+1}\leq_n T_n$ for all $n\in\w$,
then $\bigcap_{n\in\w}T_n\in\mathbb M$.

We are now in a position to start the proof. Let $M$ and  $\{\phi_i:i\in\w\}$ be such as in the formulation of
$(\dagger)$. We can additionally assume that for each $\phi\in\{\phi_i:i\in\w\}$ there are infinitely many
$i$ such that $\phi=\phi_i$. Let $\{D_n:n\in\w\}$ be the set of all open dense subsets
of $\mathbb M$ which belong to $M$. Given $T_0\in M\cap\mathbb M$,
construct a sequence $\la T_n:n\in\w\ra\in\mathbb M^\w$  as follows:
Assume that  $T_n$ has been constructed such that $(T_n)_t\in M$  for every $t\in T_n$ with $\Lev(t,T_n)=n$.
Given such a $t\in T_n$  and $k\in\w$ such that
$t\vid k\in T_n$, find $R_{t,k}\leq\phi_n((T_n)_{t\vid k})$ such that $R_{t,k}\in D_n\cap M$.
Now set $T_{n+1}=\bigcup\{R_{t,k}:t\in T_k,\Lev(t,T_n)=n, t\vid k\in T_n\}$
and note that $T_{n+1}\leq_n T_n$ and $(T_{n+1})_r\in M$
for all $r\in T_{n+1}$ with $\Lev(r,T_{n+1})=n+1$.
This completes our construction. It is straightforward to check that  $T=\bigcap_{n\in\w}T_n$ is a $(M,\mathbb M)$-generic
condition forcing $\name{G}\cap\phi_n[M\cap\mathbb M]$ to be infinite for all $n$.
 \end{proof}

Finally we have all necessary ingredients to complete the proof of Theorem~\ref{main}.
Let $V$ be a model of GCH. By \cite[Theorem~3.2]{MilTsaZdo16} there exist $\gamma$-subspaces
$X,Y$ of $2^\w$ and a continuous map $\phi:X\times Y\to\w^\w$ such that
$\phi[X\times Y]$ is dominating, i.e., for every $f\in \w^\w$ there exists
$\la x,y\ra\in X\times Y$  such that $f\leq^*\phi\la x,y\ra$. (As usual, $f\leq^* g$ for $f,g\in\w^\w$
means that the set $\{n\in\w: f(n)>g(n)\}$ is finite. Whenever we speak about unbounded or dominating subsets of
$\w^\w$, we always mean with respect to $\leq^*$.) Let $\IP$ be
the iteration of $\mathbb M$ of length $\w_2$ with countable supports, and
$G$ be $\IP$-generic.
It is well known that $V\cap \w^\w$ is unbounded\footnote{Even more is true:
there exists an ultrafilter $\U\in V$ which remains a base for an ultrafilter in $V[G]$, namely all $P$-points are like that, see \cite{BlaShe89}.
It is easy to see that the set of enumerating functions of a base of an ultrafilter
cannot be bounded.} in $V[G]$,
and hence so is $\phi[X\times Y]$. By a result of Hurewicz \cite{Hur27}
(see also \cite[Theorem~4.3]{COC2}) this implies that $X\times Y$ is not Hurewicz in $V[G]$.
On the other hand, $X$ and $Y$ remain $\gamma$-spaces in $V[G]$ by a combination
of Lemmata~\ref{just_def},
\ref{dad_impl_gampres}, \ref{dag_preservation}, and \ref{examples}.
This completes our proof.  

\medskip

\noindent\textbf{Acknowledgments.}
The authors wish to express their  thanks to the referee whose suggestions have improved our exposition in
Lemma~\ref{dag_preservation}.

\end{document}